\documentclass[11pt]{article}
\usepackage[a4paper,margin=2.5cm]{geometry}
\usepackage[T1]{fontenc}
\usepackage[utf8]{inputenc}
\usepackage{lmodern}
\usepackage{amsmath,amssymb,amsthm,mathtools}
\usepackage{enumitem}
\usepackage{microtype}
\usepackage[hidelinks]{hyperref}

\newtheoremstyle{plainbreak}
  {6pt}{6pt}{\itshape}{}\bfseries{.}{0.5em}{}
\theoremstyle{plainbreak}
\newtheorem{lemma}{Lemma}
\newtheorem{proposition}{Proposition}
\newtheorem{theorem}{Theorem}
\theoremstyle{definition}
\newtheorem{definition}{Definition}
\newtheorem{example}{Example}
\newtheorem{counterexample}{Counterexample}

\title{\textbf{Formal Matrix Rings with the Involution Property}}
\author{Valeriya Kirova\\
CR4AI, University MEPHI\\
\texttt{lera.orlanovna@gmail.com}}
\date{}

\begin{document}
\maketitle

\begin{abstract}
We study formal matrix rings with the involution property, that is, rings in which every element can be represented as the sum of an invertible element and an involution. After recalling the construction of formal matrix rings via Morita contexts, we describe involutions in triangular formal matrix rings and establish a criterion for upper triangular rings of order two. We then prove that a formal matrix ring of arbitrary finite order has the involution property whenever all of its diagonal rings have this property. Several examples and counterexamples are included to illustrate the obtained results.
\end{abstract}

\noindent\textbf{Keywords:} formal matrix ring, involution, invertible element.

\vspace{0.5em}

\noindent\textit{This paper is an English translation and extended version of the work originally published in Russian in 2019 in the proceedings of the All-Russian Youth Scientific Conference “All Facets of Mathematics and Mechanics,” held in Tomsk on April 23–27, 2019, pp. 6–17.}

\medskip
\section{Introduction}

Formal matrices, also called generalized matrices, play an important role in algebra. They occur naturally in ring and module theory and provide a common framework for ordinary matrix rings, triangular matrix rings, and Morita context rings. The underlying construction goes back to Morita's work on duality for modules \cite{Morita1958}. A systematic treatment of formal matrix rings, their modules, and their structural properties was later developed by Krylov and Tuganbaev \cite{KrylovTuganbaev2009,KrylovTuganbaev2017}; see also the monograph \cite{KrylovTuganbaevSpringer2017}. More generally, ring-theoretic properties of Morita contexts and generalized matrix rings have been investigated in a number of settings, including semiperfectness, regularity, and cleanness \cite{TangLiZhou2014}.

The decomposition of ring elements as sums of special elements is another classical theme. Henriksen studied rings generated additively by their units and, in particular, decompositions of matrices into sums of units \cite{Henriksen1974}. Nicholson's theory of clean rings, where every element is a sum of a unit and an idempotent, placed such decompositions in a broader structural setting \cite{Nicholson1977}. The property considered in the present paper is stronger in a different direction: every element is required to be a sum of a unit and an \emph{involution}, that is, a unit $v$ satisfying $v^2=1$.

The involution property in this sense was studied explicitly by Goldsmith, Meehan, and Wallutis for endomorphism rings of torsion-free groups and modules \cite{GoldsmithMeehanWallutis2007}. They also showed that if a ring $R$ has the involution property, then full matrix rings over $R$ inherit it. The present note considers the analogous question for formal matrix rings, where diagonal entries may belong to different rings and off-diagonal entries to prescribed bimodules. We first recall the construction of formal matrix rings and characterize involutions in the upper triangular case. We then prove that a $2\times2$ upper triangular formal matrix ring has the involution property exactly when its diagonal rings do, and finally establish a sufficient condition for a formal matrix ring of arbitrary order: if all diagonal rings possess the involution property, then so does the entire formal matrix ring.



A Morita context is a tuple
\[
(R,M,N,S,\varphi,\psi),
\]
where $R,S$ are rings; $M,N$ are bimodules; and $\varphi$ and $\psi$ are bimodule homomorphisms satisfying certain compatibility conditions.

From a given Morita context one constructs a ring of matrices of the form
\[
K=\begin{pmatrix}R&M\\N&S\end{pmatrix}.
\]
It is called the \emph{Morita context ring}. It is also called a formal matrix ring of order two, or a generalized matrix ring.

The entries of matrices in a Morita context ring, which below we shall simply call a formal matrix ring, may belong to several rings and bimodules. Formal matrices are added and multiplied according to the usual rules of matrix addition and multiplication. These algebraic operations produce a ring: a formal matrix ring. Formal matrix rings extend the notion of the ring of $n\times n$ matrices over a given ring.

\begin{definition}
Let $R$ be a ring. A left $R$-module is an additive abelian group $A$ together with a map
\[
R\times A\to A
\]
such that for all $a,b\in A$ and $r,s\in R$,
\begin{enumerate}[label=\arabic*.]
\item $r(a+b)=ra+rb$,
\item $(r+s)a=ra+sa$,
\item $(rs)a=r(sa)$,
\item $1a=a$.
\end{enumerate}
The definition of a right $R$-module is analogous.
\end{definition}

\begin{definition}
Let $S$ be another ring. An $R$--$S$-bimodule ${}_RA_S$ is an abelian group $A$ that is simultaneously a left $R$-module ${}_RA$ and a right $S$-module $A_S$, and such that for all $a\in A$, $r\in R$, $s\in S$,
\[
(ra)s=r(as).
\]
\end{definition}

\section*{Construction of Formal Matrix Rings}
All rings will be assumed associative and unital, and all modules and bimodules unital.

We define a formal matrix ring. Let $R$ and $S$ be two rings. Let $M$ be an $R$--$S$-bimodule and $N$ an $S$--$R$-bimodule. Denote by $K$ the set of all matrices of the form
\[
K=\begin{pmatrix}r&m\\n&s\end{pmatrix},
\qquad r\in R,\ s\in S,\ m\in M,\ n\in N.
\]
With respect to matrix addition, $K$ is clearly an abelian group. However, $K$ is not yet a ring. In order to turn $K$ into a ring, we must be able to compute ``products'' of elements $n\in N$ and $m\in M$ so that the resulting products lie in $R$ and $S$:
\[
mn\in R,\qquad nm\in S.
\]
This can be done correctly as follows. Suppose bimodule homomorphisms are given:
\[
\varphi:M\otimes_S N\to R,
\qquad
\psi:N\otimes_R M\to S.
\]
For all $n\in N$ and $m\in M$, set
\[
\varphi(m\otimes n)=mn,
\qquad
\psi(n\otimes m)=nm.
\]
Then matrices in $K$ can be multiplied as in an ordinary matrix ring:
\[
\begin{pmatrix}r&m\\n&s\end{pmatrix}
\begin{pmatrix}r_1&m_1\\n_1&s_1\end{pmatrix}
=
\begin{pmatrix}
rr_1+mn_1 & rm_1+ms_1\\
nr_1+sn_1 & nm_1+ss_1
\end{pmatrix},
\]
where $r,r_1\in R$, $s,s_1\in S$, $m,m_1\in M$, and $n,n_1\in N$.

Here $rm_1$, $ms_1$, $nr_1$, and $sn_1$ are the corresponding module products. We assume that for all $m,m_1\in M$ and $n,n_1\in N$ the associativity relations
\[
(mn)m_1=m(nm_1),
\qquad
(nm)n_1=n(mn_1)
\]
hold.

With the above addition and multiplication, $K$ is a ring. In checking the ring axioms, one must also use the basic properties of the tensor product and the bimodule nature of $\varphi$ and $\psi$.

The converse is also true: if $K$ is a ring, then the stated associativity relations hold. The ring $K$ is called a \emph{formal matrix ring} and is also denoted by
\[
K=\begin{pmatrix}R&M\\N&S\end{pmatrix}.
\]
If $N=0$, then $K$ is the ring of upper triangular matrices:
\[
K=\begin{pmatrix}R&M\\0&S\end{pmatrix}.
\]
If $M=0$, then $K$ is the ring of lower triangular matrices:
\[
K=\begin{pmatrix}R&0\\N&S\end{pmatrix}.
\]
To define rings of upper or lower triangular matrices, the homomorphisms $\varphi$ and $\psi$ are not needed.

When studying a formal matrix ring $K$, it is necessary to understand how the properties of this ring depend on the properties of the rings $R$ and $S$, the bimodules $M$ and $N$, and the homomorphisms $\varphi$ and $\psi$.

It is sometimes convenient to identify matrices with their corresponding elements. For example, an element $r\in R$ can be identified with the matrix
\[
\begin{pmatrix}r&0\\0&0\end{pmatrix}.
\]
A formal matrix ring of arbitrary order $n$ can be defined analogously.

\begin{definition}
Let $R$ be a ring. An element $v\in R$ is called \emph{invertible} if there exists an element $x\in R$ such that
\[
vx=1=xv.
\]
Such an $x$ is called the inverse of $v$ and is denoted by $v^{-1}$. Thus
\[
vv^{-1}=1=v^{-1}v.
\]
The element $v^{-1}$ is unique for $v$.
\end{definition}

\begin{definition}
An element $v$ of a ring $R$ is called an \emph{involution} if
\[
v^2=1,
\]
i.e. $v^{-1}=v$. Equivalently, $v$ is an invertible element of order $2$.
\end{definition}

The simplest examples of involutions are $1$ and $-1$. Indeed,
\[
(-1)^2=1.
\]

We say that a ring $R$ has the \emph{involution property} if every element $x\in R$ can be represented in the form
\[
x=u+v,
\]
where $u$ and $v$ are invertible elements and, moreover, $v$ is an involution.

The following question arises: which formal matrix rings have the involution property? To answer it, we first determine which matrices are involutions.

\begin{lemma}
Let $K$ be the ring of upper triangular matrices
\[
K=\begin{pmatrix}R&M\\0&S\end{pmatrix},
\]
where $R$ and $S$ have the involution property. A matrix
\[
\begin{pmatrix}r&m\\0&s\end{pmatrix}
\]
is an involution if and only if $r$ and $s$ are involutions and
\[
rm=-ms.
\]
\end{lemma}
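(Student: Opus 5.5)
The plan is to compute the square of the matrix directly, using the multiplication rule for formal matrices with $N=0$, and compare it entrywise with the identity matrix
\[
\begin{pmatrix}1&0\\0&1\end{pmatrix}.
\]
Since there is no lower-left entry, no products $mn$ or $nm$ occur, so the homomorphisms $\varphi,\psi$ play no role. The multiplication rule gives
\[
\begin{pmatrix}r&m\\0&s\end{pmatrix}^2
=\begin{pmatrix}r^2 & rm+ms\\ 0 & s^2\end{pmatrix}.
\]
Because two formal matrices are equal exactly when their corresponding entries are equal, the matrix is an involution if and only if three conditions hold: $r^2=1$ in $R$, $s^2=1$ in $S$, and $rm+ms=0$ in $M$.

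For the forward direction, assume the matrix squares to the identity. Then $r^2=1$ and $s^2=1$, so $r$ and $s$ are involutions by the definition of an involution. The upper-right entry gives $rm+ms=0$, that is, $rm=-ms$. For the converse, assume $r^2=1$, $s^2=1$ and $rm=-ms$. Substituting these into the displayed product yields the identity matrix, so the matrix is its own inverse.

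The one point needing care is that $rm$ and $ms$ are the left $R$-action and right $S$-action on $M$, so $rm+ms$ makes sense in $M$. Also, the block form of the product uses the bimodule law $(rm)s=r(ms)$ implicitly only through associativity of $K$, which is already granted. I would also remark that the hypothesis that $R$ and $S$ have the involution property is not used; the characterization holds for arbitrary $R$ and $S$. There is no real obstacle here: the argument is an entrywise comparison.
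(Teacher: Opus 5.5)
Your proof is correct and is essentially the paper's own argument: compute the square as $\begin{pmatrix} r^2 & rm+ms\\ 0 & s^2\end{pmatrix}$ and compare it entrywise with the identity, in both directions. Your remark that the hypothesis that $R$ and $S$ have the involution property is never used is also right; the paper's proof does not use it either.
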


\begin{proof}
\emph{Necessity.} Suppose
\[
\begin{pmatrix}r&m\\0&s\end{pmatrix}
\]
is an involution. This means that
\[
\begin{pmatrix}r&m\\0&s\end{pmatrix}^{2}
=
\begin{pmatrix}1&0\\0&1\end{pmatrix}.
\]
Compute
\[
\begin{pmatrix}r&m\\0&s\end{pmatrix}
\begin{pmatrix}r&m\\0&s\end{pmatrix}
=
\begin{pmatrix}r^2&rm+ms\\0&s^2\end{pmatrix}.
\]
Since the original matrix is an involution,
\[
\begin{pmatrix}r^2&rm+ms\\0&s^2\end{pmatrix}
=
\begin{pmatrix}1&0\\0&1\end{pmatrix}.
\]
Hence
\[
r^2=1,\qquad s^2=1,\qquad rm+ms=0.
\]
Thus $r$ and $s$ are involutions and $rm=-ms$.

\emph{Sufficiency.} Suppose $r$ and $s$ are involutions and $rm=-ms$. Then
\[
\begin{pmatrix}r&m\\0&s\end{pmatrix}
\begin{pmatrix}r&m\\0&s\end{pmatrix}
=
\begin{pmatrix}r^2&rm+ms\\0&s^2\end{pmatrix}.
\]
Using $r^2=1$, $s^2=1$, and $rm=-ms$, we obtain
\[
\begin{pmatrix}r^2&rm+ms\\0&s^2\end{pmatrix}
=
\begin{pmatrix}1&0\\0&1\end{pmatrix}.
\]
Therefore the matrix is an involution.
\end{proof}

\begin{proposition}
Let
\[
K=\begin{pmatrix}R&M\\0&S\end{pmatrix}
\]
be the ring of upper triangular matrices of order two. Then $K$ has the involution property if and only if $R$ and $S$ have the involution property.
\end{proposition}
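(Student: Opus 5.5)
We prove the two directions separately. Units of $K$ are exactly the matrices with unit diagonal entries. For an upper triangular matrix $\begin{pmatrix}a&m\\0&b\end{pmatrix}$ with $a\in R$, $b\in S$ invertible, the inverse is
\[
\begin{pmatrix}a^{-1}&-a^{-1}mb^{-1}\\0&b^{-1}\end{pmatrix}.
\]
Conversely, if such a matrix is invertible in $K$, then comparing diagonal entries of $XX^{-1}=1=X^{-1}X$ shows that $a$ and $b$ are invertible. This works because the inverse of an upper triangular matrix in $K$ is again upper triangular, as $K$ itself consists only of upper triangular matrices. We record this as a preliminary observation.

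\emph{Sufficiency.} Let $R$ and $S$ have the involution property and take $X=\begin{pmatrix}r&m\\0&s\end{pmatrix}\in K$. Write $r=u_1+v_1$ and $s=u_2+v_2$ with $u_1,u_2$ invertible and $v_1,v_2$ involutions. Set
\[
V=\begin{pmatrix}v_1&0\\0&v_2\end{pmatrix},\qquad U=\begin{pmatrix}u_1&m\\0&u_2\end{pmatrix}.
\]
By the Lemma, with off-diagonal entry $0$ so that the condition $v_1\cdot 0=-0\cdot v_2$ holds trivially, $V$ is an involution. Alternatively one can check $V^2=1$ directly, which avoids relying on the Lemma's extra hypothesis. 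By the preliminary observation $U$ is invertible, and $X=U+V$. The key choice is to put the whole off-diagonal entry into the unit part, which is harmless because units of $K$ impose no condition on the corner.

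\emph{Necessity.} Suppose $K$ has the involution property and let $r\in R$. Apply the property to $\begin{pmatrix}r&0\\0&0\end{pmatrix}=U+V$. The diagonal entries of a unit of $K$ are units, by the observation. The diagonal entries of an involution are involutions, by the necessity computation in the Lemma, which uses only $V^2=1$ and not the hypotheses on $R$ and $S$. Reading off the $(1,1)$ entry gives $r=u_{11}+v_{11}$ with $u_{11}$ invertible and $v_{11}^2=1$. Applying the same argument to $\begin{pmatrix}0&0\\0&s\end{pmatrix}$ handles $S$.

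The only real obstacle is the claim that a unit of $K$ has invertible diagonal entries: we must make sure that one-sided identities in the corners really give two-sided inverses. Writing out $XY=1$ and $YX=1$ for $Y=\begin{pmatrix}a'&m'\\0&b'\end{pmatrix}$ yields $aa'=1=a'a$ and $bb'=1=b'b$ directly, so the difficulty is minor.
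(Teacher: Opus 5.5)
Your proof is correct and is essentially the paper's argument: for sufficiency you use the same splitting $\begin{pmatrix}u_1&m\\0&u_2\end{pmatrix}+\begin{pmatrix}v_1&0\\0&v_2\end{pmatrix}$, and for necessity you read off the diagonal entries of a decomposition of $\begin{pmatrix}r&0\\0&0\end{pmatrix}$ (and of its analogue for $S$). You also supply two justifications the paper leaves implicit: that units of $K$ have invertible diagonal entries, and that the Lemma's necessity computation does not use its hypothesis on $R$ and $S$, which removes the apparent circularity in the paper's appeal to the Lemma.
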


\begin{proof}
\emph{Necessity.} Suppose that $K$ has the involution property. Take any $r\in R$. It can be written as
\[
\begin{pmatrix}r&0\\0&0\end{pmatrix}
=
\begin{pmatrix}u_1&m_1\\0&u_2\end{pmatrix}
+
\begin{pmatrix}v_1&m_2\\0&v_2\end{pmatrix},
\]
where the first matrix is invertible and the second is an involution. Hence $u_1$ is invertible and $v_1$ is an involution (by the preceding lemma). Furthermore,
\[
r=u_1+v_1.
\]
Thus $R$ has the involution property. The proof for $S$ is analogous.

\emph{Sufficiency.} Suppose that $R$ and $S$ have the involution property. Take an arbitrary matrix in the formal matrix ring $K$:
\[
\begin{pmatrix}r&m\\0&s\end{pmatrix}\in K.
\]
By assumption, the elements $r$ and $s$ can be represented as
\[
r=u_1+v_1,
\qquad
s=u_2+v_2,
\]
where $u_1,u_2$ are invertible and $v_1,v_2$ are involutions.
Then
\[
\begin{pmatrix}r&m\\0&s\end{pmatrix}
=
\begin{pmatrix}u_1+v_1&m\\0&u_2+v_2\end{pmatrix}
=
\begin{pmatrix}u_1&m\\0&u_2\end{pmatrix}
+
\begin{pmatrix}v_1&0\\0&v_2\end{pmatrix},
\]
where
\[
\begin{pmatrix}u_1&m\\0&u_2\end{pmatrix}
\]
is invertible, while
\[
\begin{pmatrix}v_1&0\\0&v_2\end{pmatrix}
\]
is an involution. Therefore $K$ has the involution property.
\end{proof}

\begin{theorem}
Let $K$ be a formal matrix ring of order $n$,
\[
K=
\begin{pmatrix}
R_1 & M_{1,2} & \cdots & M_{1,n}\\
M_{2,1} & R_2 & \cdots & M_{2,n}\\
\vdots & \vdots & \ddots & \vdots\\
M_{n,1} & M_{n,2} & \cdots & R_n
\end{pmatrix},
\]
where $R_i$ $(i=1,\ldots,n)$ are rings. If the rings $R_i$ have the involution property, then $K$ also has the involution property.
\end{theorem}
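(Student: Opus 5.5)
The plan is to argue by induction on $n$, using a block Schur-complement factorization. For any $A\in K$ I will produce a decomposition $A=U+V$ in which $V=\operatorname{diag}(v_1,\dots,v_n)$ is a diagonal involution ($v_i^2=1$ in $R_i$) and $U$ is invertible. Diagonal matrices multiply entrywise, so any diagonal matrix of involutions is an involution. The difficulty, compared with the Proposition, is that the off-diagonal blocks in both triangles may be nonzero. So $U=A-V$ no longer has triangular shape, and its invertibility cannot be read off from the diagonal.

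For $n=1$ there is nothing to prove. For $n\ge 2$, partition $A$ as
\[
A=\begin{pmatrix} a & b\\ c & B\end{pmatrix},
\]
with $a\in R_1$, $b$ a row with entries in $M_{1,j}$, $c$ a column with entries in $M_{i,1}$, and $B$ in the corner ring $K'$. Here $K'$ is the formal matrix ring of order $n-1$ built on $R_2,\dots,R_n$ and the $M_{i,j}$ with $i,j\ge 2$. The construction has three steps.
\begin{enumerate}[label=\arabic*.]
\item Since $R_1$ has the involution property, write $a=u_1+v_1$ with $u_1$ invertible and $v_1^2=1$.
\item Form the Schur complement $B'=B-c\,u_1^{-1}b$. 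Its $(i,j)$ entry is $c_i u_1^{-1} b_j$, obtained through the multiplication maps $M_{i,1}\times R_1\times M_{1,j}\to M_{i,j}$ (or $\to R_i$ when $i=j$). Hence $B'\in K'$.
\item By the induction hypothesis applied to $K'$ and $B'$, obtain a diagonal involution $V'=\operatorname{diag}(v_2,\dots,v_n)$ such that $X'=B'-V'$ is invertible in $K'$.
\end{enumerate}
Then set $V=\operatorname{diag}(v_1,V')$ and $U=A-V$.

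It remains to check that $U$ is invertible. I would verify the factorization
\[
U=\begin{pmatrix} u_1 & b\\ c & B-V'\end{pmatrix}
=\begin{pmatrix} 1 & 0\\ c\,u_1^{-1} & 1\end{pmatrix}
\begin{pmatrix} u_1 & 0\\ 0 & X'\end{pmatrix}
\begin{pmatrix} 1 & u_1^{-1}b\\ 0 & 1\end{pmatrix}.
\]
Multiplying out and using the associativity relations of the formal matrix ring gives back $U$, because $c\,u_1^{-1}b+X'=B-V'$. The outer factors are invertible, with inverses obtained by negating the off-diagonal block. The middle factor is invertible with inverse $\operatorname{diag}(u_1^{-1},X'^{-1})$. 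Therefore $U$ is invertible, and $A=U+V$ is the required decomposition.

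I expect the main obstacle to be bookkeeping rather than anything conceptual. One must confirm that block multiplication in $K$, viewed as a $2\times 2$ formal matrix ring with blocks $R_1$ and $K'$ (and the appropriate row and column bimodules), is legitimate. In particular, the products $c\,u_1^{-1}b$ must land in the correct components, and associativity must hold for the triple products used in the factorization. I would handle this by noting that the multiplication of $K$ is exactly block multiplication with respect to this partition, which is part of the definition of a formal matrix ring of order $n$.
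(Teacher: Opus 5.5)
Your proof is correct and follows essentially the same route as the paper: an induction on $n$ using a Schur-complement block elimination, with the invertibility of $U$ obtained from triangular block factors. The differences are cosmetic. You split off $R_1$ and apply the induction hypothesis to the complement $B-cu_1^{-1}b$, while the paper applies it to the leading $(n-1)\times(n-1)$ block, uses the involution property of $R_n$ on the scalar complement $d-CU^{-1}B$, and checks invertibility via $PVQ=I$ rather than your explicit factorization.
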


\begin{proof}
Take an arbitrary matrix $M$ in the formal matrix ring $K$:
\[
M=
\begin{pmatrix}
r_1 & m_{1,2} & \cdots & m_{1,n}\\
m_{2,1} & r_2 & \cdots & m_{2,n}\\
\vdots & \vdots & \ddots & \vdots\\
m_{n,1} & m_{n,2} & \cdots & r_n
\end{pmatrix}.
\]
Since, by hypothesis, all rings $R_i$ have the involution property, the elements $r_i\in R_i$ $(i=1,\ldots,n)$ can be represented as
\[
r_i=u_i+v_i,
\]
where $u_i$ is invertible and $v_i$ is an involution in $R_i$.

Hence the original matrix $M$ can be written as
\[
\begin{aligned}
M
&=
\begin{pmatrix}
u_1+v_1 & m_{1,2} & \cdots & m_{1,n}\\
m_{2,1} & u_2+v_2 & \cdots & m_{2,n}\\
\vdots & \vdots & \ddots & \vdots\\
m_{n,1} & m_{n,2} & \cdots & u_n+v_n
\end{pmatrix}\\[1ex]
&=
\begin{pmatrix}
u_1 & m_{1,2} & \cdots & m_{1,n}\\
m_{2,1} & u_2 & \cdots & m_{2,n}\\
\vdots & \vdots & \ddots & \vdots\\
m_{n,1} & m_{n,2} & \cdots & u_n
\end{pmatrix}
+
\begin{pmatrix}
v_1 & 0 & \cdots & 0\\
0 & v_2 & \cdots & 0\\
\vdots & \vdots & \ddots & \vdots\\
0 & 0 & \cdots & v_n
\end{pmatrix}.
\end{aligned}
\]

We prove by mathematical induction that the formal matrix ring $K$ has the involution property.

Let $n=1$. In this case, the matrix $M$ consists of a single element $r_1$, which has the involution property. Hence it is clear that the formal matrix ring $K$ also has this property.

Now assume $n\ge 2$. Write the original matrix $M$ in block form:
\[
M=\begin{pmatrix}A&B\\C&d\end{pmatrix}.
\]
Here $A$ is an $(n-1)\times(n-1)$ matrix,
\[
A=
\begin{pmatrix}
R_1 & M_{1,2} & \cdots & M_{1,n-1}\\
M_{2,1} & R_2 & \cdots & M_{2,n-1}\\
\vdots & \vdots & \ddots & \vdots\\
M_{n-1,1} & M_{n-1,2} & \cdots & R_{n-1}
\end{pmatrix},
\]
$C$ is a row vector of size $1\times(n-1)$,
\[
C=(M_{n,1}\; M_{n,2}\; \cdots\; M_{n,n-1}),
\]
$B$ is a column vector of size $(n-1)\times1$,
\[
B=\begin{pmatrix}M_{1,n}\\M_{2,n}\\\vdots\\M_{n-1,n}\end{pmatrix},
\]
and $d$ is an element with the involution property, $d\in R_n$.

By the induction hypothesis, the matrix $A$ can be represented as the sum of two matrices,
\[
A=D+U,
\]
where $U$ is invertible and $D$ is a diagonal involution. Put
\[
a=CU^{-1}B,\qquad a\in R_n.
\]
Next write
\[
d-a=u+v,
\]
where $u$ is invertible and $v$ is an involution in $R_n$. Then $d$ can be represented as
\[
d=u+a+v.
\]
Therefore
\[
M=
\begin{pmatrix}U&B\\C&a+u\end{pmatrix}
+
\begin{pmatrix}D&0\\0&v\end{pmatrix}.
\]
Clearly, the second matrix in the sum is a diagonal involution. We show that the first matrix is invertible. Introduce two invertible matrices $P$ and $Q$:
\[
P=
\begin{pmatrix}
I&0\\
-CU^{-1}&1
\end{pmatrix},
\qquad
Q=
\begin{pmatrix}
U^{-1}&-U^{-1}Bu^{-1}\\
0&u^{-1}
\end{pmatrix}.
\]
Here $I$ is the identity matrix of order $n-1$. Denote the first matrix in the sum by
\[
V=\begin{pmatrix}U&B\\C&a+u\end{pmatrix}.
\]
Computing the product $PVQ$, we obtain
\[
PVQ=
\begin{pmatrix}I&0\\0&1\end{pmatrix}.
\]
Thus the product is the identity matrix, and therefore $V$ is invertible.

Hence $M$ is the sum of an invertible matrix and an involution. Consequently, the formal matrix ring $K$ has the involution property.
\end{proof}

\subsection{Examples and counterexamples}

We begin with several elementary examples illustrating the involution property.

\begin{example}
Let \(F\) be a field of characteristic different from \(2\). Then \(F\) has the involution property.

Indeed, the elements \(1\) and \(-1\) are involutions. For any \(a\in F\), if \(a\neq 1\), then
\[
a=(a-1)+1,
\]
and \(a-1\) is invertible. If \(a=1\), then
\[
1=2+(-1),
\]
where \(2\) is invertible because \(\operatorname{char}F\neq 2\). Thus every element of \(F\) is a sum of an invertible element and an involution.
\end{example}

\begin{example}
Consider the ring of upper triangular matrices
\[
K=
\begin{pmatrix}
\mathbb{F}_3 & \mathbb{F}_3\\
0 & \mathbb{F}_3
\end{pmatrix}.
\]
Since \(\mathbb{F}_3\) has the involution property, Proposition~6 implies that \(K\) also has the involution property.

For example,
\[
A=
\begin{pmatrix}
0&1\\
0&2
\end{pmatrix}
\]
admits the decomposition
\[
A=
\begin{pmatrix}
1&1\\
0&1
\end{pmatrix}
+
\begin{pmatrix}
2&0\\
0&1
\end{pmatrix}.
\]
The first matrix is invertible, while the second matrix is an involution, since
\[
\begin{pmatrix}
2&0\\
0&1
\end{pmatrix}^{2}
=
\begin{pmatrix}
1&0\\
0&1
\end{pmatrix}
\]
over \(\mathbb{F}_3\).
\end{example}

The assumptions above cannot be omitted.

\begin{counterexample}
The field \(\mathbb{F}_2\) does not have the involution property.

Indeed, its only invertible element is \(1\), and its only involution is also \(1\). Hence the only element representable as the sum of an invertible element and an involution is
\[
1+1=0.
\]
Therefore \(1\in\mathbb{F}_2\) has no such representation.
\end{counterexample}

\begin{counterexample}
The ring of integers \(\mathbb{Z}\) does not have the involution property.

The only invertible elements of \(\mathbb{Z}\) are \(1\) and \(-1\), and these are also its only involutions. Consequently, the possible sums of an invertible element and an involution are only
\[
-2,\qquad 0,\qquad 2.
\]
In particular, the element \(1\) cannot be represented in the required form.
Therefore \(\mathbb{Z}\) does not have the involution property.

Consequently, by Proposition~6, the triangular matrix ring
\[
\begin{pmatrix}
\mathbb{Z}&\mathbb{Z}\\
0&\mathbb{Z}
\end{pmatrix}
\]
does not have the involution property either.
\end{counterexample}


\begin{thebibliography}{9}

\bibitem{Morita1958}
K. Morita, ``Duality for modules and its applications to the theory of rings with minimum condition,'' \emph{Science Reports of the Tokyo Kyoiku Daigaku, Section A}, vol.~6, pp.~83--142, 1958.

\bibitem{Henriksen1974}
M. Henriksen, ``Two classes of rings generated by their units,'' \emph{Journal of Algebra}, vol.~31, no.~1, pp.~182--193, 1974. doi:10.1016/0021-8693(74)90013-1.

\bibitem{Nicholson1977}
W. K. Nicholson, ``Lifting idempotents and exchange rings,'' \emph{Transactions of the American Mathematical Society}, vol.~229, pp.~269--278, 1977. doi:10.1090/S0002-9947-1977-0439876-2.

\bibitem{GoldsmithMeehanWallutis2007}
B. Goldsmith, C. Meehan, and S. L. Wallutis, ``Torsion-free groups and modules with the involution property,'' \emph{Journal of Pure and Applied Algebra}, vol.~208, no.~1, pp.~127--134, 2007. doi:10.1016/j.jpaa.2005.11.006.

\bibitem{KrylovTuganbaev2009}
P. A. Krylov and A. A. Tuganbaev, ``Modules over formal matrix rings,'' \emph{Fundamental and Applied Mathematics}, vol.~15, no.~8, pp.~145--211, 2009 [in Russian]; English transl., \emph{Journal of Mathematical Sciences}, vol.~171, no.~2, pp.~248--295, 2010. doi:10.1007/s10958-010-0133-5.

\bibitem{KrylovTuganbaev2014}
P. A. Krylov and A. A. Tuganbaev, ``Formal matrices and their determinants,'' \emph{Fundamental and Applied Mathematics}, vol.~19, no.~1, pp.~65--119, 2014 [in Russian].

\bibitem{TangLiZhou2014}
G. Tang, C. Li, and Y. Zhou, ``Study of Morita contexts,'' \emph{Communications in Algebra}, vol.~42, no.~4, pp.~1668--1681, 2014. doi:10.1080/00927872.2012.748327.

\bibitem{KrylovTuganbaev2017}
P. A. Krylov and A. A. Tuganbaev, \emph{Formal Matrix Rings and Modules over Them}. Moscow: MCCME, 2017 [in Russian].

\bibitem{KrylovTuganbaevSpringer2017}
P. Krylov and A. Tuganbaev, \emph{Formal Matrices}, Algebra and Applications, vol.~23. Cham: Springer, 2017. doi:10.1007/978-3-319-53907-2.

\end{thebibliography}
\end{document}